\newtheorem{thm}{Theorem}[section]
\newtheorem{lem}[thm]{Lemma}
\newtheorem{prop}[thm]{Proposition}
\theoremstyle{definition}
\newtheorem{ass}[thm]{Assumption}
\theoremstyle{remark}
\newtheorem{rem}[thm]{Remark}
\numberwithin{equation}{section}
\newcommand{\abs}[1]{\left\vert#1\right\vert}
\newcommand{\set}[1]{\left\{#1\right\}}
\newcommand{\Real}{\mathbb R}
\newcommand{\Natural}{\mathbb N}
\newcommand{\such}{\ | \ }
\newcommand{\prob}{\mathbb{P}}
\newcommand{\qprob}{\mathbb{Q}}
\newcommand{\expec}{\mathbb{E}}
\newcommand{\F}{\mathcal{F}}
\newcommand{\ud}{\mathrm d}
\newcommand{\inner}[2]{\left \langle #1 , #2 \right \rangle}
\newcommand{\nin}{n \in \Natural}
\newcommand{\limt}{\lim_{t \to \infty}}
\newcommand{\e}{\mathrm{e}}
\newcommand{\pare}[1]{\left(#1\right)}
\newcommand{\bra}[1]{\left[#1\right]}
\newcommand{\dbra}[1]{[\kern-0.15em[ #1 ]\kern-0.15em]}
\newcommand{\dbraco}[1]{[\kern-0.15em[ #1 [\kern-0.15em[}
\newcommand{\indic}{\mathbb{I}}
\newcommand{\probbb}{\prob_{\mathsf{BB}^3}}
\newcommand{\covbb}{\mathrm{cov}_{\mathsf{BB}^3}}
\newcommand{\expecbb}{\expec_{\mathsf{BB}^3}}
\newcommand{\probax}{\prob^{a}_x}
\newcommand{\pax}{p^{a}_x}
\newcommand{\px}{p_x}
\newcommand{\lgx}{{\lambda_x}}
\newcommand{\phx}{\widehat{p}_x^N}
\newcommand{\betah}{\widehat{\beta}}
\newcommand{\dfn}{\, := \,}
\begin{document}

\title[Monte Carlo Estimation of diffusion first passage time densities]{Efficient estimation of one-dimensional diffusion first passage time densities via Monte Carlo simulation}%

\author{Tomoyuki Ichiba}%
\address[Tomoyuki Ichiba]{Department of Statistics and Applied Probability, University of California Santa Barbara, CA 93106, South Hall 5607A}
\email{ichiba@pstat.ucsb.edu}

\author{Constantinos Kardaras}%
\address[Constantinos Kardaras]{Department of Mathematics and Statistics, Boston University, 111 Cummington Street, Boston, MA 02215, USA}
\email{kardaras@bu.edu}

\thanks{The second author gratefully acknowledges partial support by the National Science Foundation, grant number DMS-0908461}%

\date{\today}%
\begin{abstract}
We propose a method for estimating first passage time densities of one-dimensional diffusions via Monte Carlo simulation. Our approach involves a representation of the first passage time density as expectation of a functional of the three-dimensional Brownian bridge. As the latter process can be simulated exactly, our method leads to almost unbiased estimators. Furthermore, since the density is estimated directly, a convergence of order $1 / \sqrt{N}$, where $N$ is the sample size, is achieved, the last being in sharp contrast to the slower non-parametric rates achieved by kernel smoothing of cumulative distribution functions.
\end{abstract}

\maketitle

\setcounter{section}{-1}

\section{Introduction}

The problem of computing the distribution of the first time that a diffusion crosses a certain level naturally arises in many different contexts. As probably the most prevalent we mention quantitative finance, where first passage times are used in credit risk (times of default) as well as in defining exotic contingent claims (so-called barrier options). In this paper, we focus on the numerical computation of the probability density function of first-passage times associated with a general one-dimensional diffusion.

Densities of first passage times have known analytic expressions only in very particular cases. The primary example is Brownian motion with certain (constant) drift and diffusion rates, where one uses a combination of Girsanov's theorem and the special case of standard (driftless) Brownian motion. The first passage time density for the latter case can be obtained by the reflexion principle --- see, for example, \cite[2.6A]{MR1121940}. 
Another example of explicit form of the first-passage time density is the case of the radial Ornstein-Uhlenbeck process --- see \cite{MR1725406} and \cite{GJY03}.

In absence of general analytic expressions for first passage time distributions, computational methods are indispensable and, in fact, widely used. One branch uses Volterra integral equations --- we mention \cite{MR776891}, \cite{MR1984752} and \cite{MR2591904} as representative papers dealing with this approach. Alternatively, one can use Monte-Carlo simulation to attack this problem. The simplest scheme uses the so-called Euler scheme in order to approximate the solution of the stochastic differential equation governing the diffusion at predetermined grid time-points $i h$, $i = 0, 1, \ldots$, where $h$ is the step size, stops at the first time that the diffusion crosses the level of interest, and continuing this way obtain an estimator for the cumulative distribution function of the first-passage time.  As the Euler scheme is only approximate\footnote{To account for the fact the passage can potentially happen in-between the sampled points, a Brownian bridge interpolation may be used --- this means that the conditional probability distribution of first-passage time occurred in the interval given the simulated values at the end points is approximated by that of a Brownian bridge. This could potentially reduce the bias, but the whole scheme is still only approximate and some bias remains.}, it causes bias in the estimation of the probability distribution function. (See \cite{MR1999614} for general discussion, \cite{MR1757112} for the evaluation of error via partial differential equations and \cite{MR1849251} for a sharp large deviation principle approach.) The issue with the bias becomes immensely more severe in the numerical computation of the density, as it will involve some kind of numerical differentiation of the (non-smooth) empirical distribution function. Even if one uses an exact simulation approach for the diffusion in question (which is, of course, available only in special cases), the estimator for the density will have huge variance. To top it all, even if all the aforementioned problems can be eliminated, one can never hope for convergence of the estimators to the true density in order $1/ \sqrt{N}$, where $N$ is the ``path-sample'' size, as the problem is non-parametric. 

\smallskip

In this work, we offer an alternative approach which has clear advantages. First, we arrive at a representation of the density function in terms of expectation of a functional of a three-dimensional Brownian bridge. This makes it possible to estimate \emph{directly} the first passage time density without having to rely on estimators of the cumulative distribution function, achieving this way the ``parametric'' rate of convergence $1 / \sqrt{N}$, where $N$ is the sample size. Furthermore, only the three-dimensional Brownian bridge is involved in the simulation, which can be carried out exactly. There is an integral involving the previous three-dimensional Brownian bridge, which can be approximated via a Riemann sum; therefore, the error of the approximation can be estimated efficiently. By construction, our method significantly improves both the bias and variance of the density estimation obtained via the empirical distribution function. The only potential problem of our approach is large-time density estimation, since the thin grid that has to be used in the simulation of the Brownian bridge will result in high computational effort. To circumvent this issue, we notice that the tails of the first-passage distribution usually decrease exponentially with a rate that can be expressed as the principal eigenvalue of a certain Dirichlet boundary problem involving a second-order ordinary differential equation. This implies that a mixture of Monte-Carlo and ordinary differential equation techniques can be efficiently utilized and improve the quality of our estimator.


The structure of the paper is simple. In Section 1, the problem is formulated and the key representation formula is obtained. In Section 2, we discuss the Monte-Carlo estimator of the first passage time density function, and study its large-sample properties. In Section 3, the relation between the exponential tail decay of the probability density and the eigenvalues of a Dirichlet boundary problem is discussed. The proofs of all the results are deferred to Appendix \ref{sec: proofs} in order to keep the presentation smooth in the main body of the paper.

\section{A Representation of First-Passage-Time Densities}

\subsection{The set-up} Consider a one-dimensional diffusion $X$ with dynamics
\begin{equation} \label{eq: diffusion}
\ud X_t = a (X_t) \ud t + \ud W_t, \quad t \in \Real_+.
\end{equation}
Above, $W$ is a standard one-dimensional Brownian motion. The restrictions on the drift function $a$ that we shall impose later (Assumptions \ref{ass: basic}) ensure that 
(\ref{eq: diffusion}) has a weak solution, unique in the sense of probability law, for any initial condition $x \in (0, \infty)$.  Let $\probax$ will denote the law on the canonical path-space $C (\Real_+; \Real)$ of continuous functions from $\Real_+$ to $\Real$ that makes the coordinate processes behave according to \eqref{eq: original diffusion} and is such that $\probax [X_0 = x] = 1$.

Define $\tau_0 \dfn \inf \set{t \in \Real_+ \such X_t = 0}$ to be the first passage time of $X$ at level zero. 
We shall consider the problem of finding convenient, in terms of numerical approximation using the Monte-Carlo simulation technique, representations of the quantity
\[
\pax (t) \dfn \frac{\partial}{\partial t}  \probax \bra{
\tau_0 \leq t } ; \quad x \in (0, \infty)\, , \, \, t \in \Real_+,
\]
i.e., the density of the first-passage time of the diffusion at level zero.

\begin{rem}
The fact that we are using unit diffusion coefficient in \eqref{eq: diffusion} by no means entails loss of generality in our discussion. Indeed, consider a general one-dimensional diffusion $Y$ with dynamics
\begin{equation} \label{eq: original diffusion}
\ud Y_t = b (Y_t) \ud t + \sigma(Y_t) \ud W_t, \quad t \in \Real_+,
\end{equation}
where $W$ is a standard one-dimensional Brownian motion, such that $Y_0 = y \in \Real$. If \eqref{eq: original diffusion} has a weak solution unique in the sense of probability law, we may assume without loss of generality that $\sigma \geq 0$. (Indeed, otherwise we replace $\sigma$ by $|\sigma|$ in \eqref{eq: original diffusion} and we obtain the same law for the process $Y$.) Consider a level $\ell < y$. Under the mild assumption that $1 / \sigma$ is locally integrable, the transformation $X = \int_\ell^{Y} \pare{1 / \sigma(z)}\ud z$ defines a diffusion with dynamics $\ud X_t = a(X_t) \ud t + \ud W_t$, for a function $a$ that is easily computable from $b$ and $\sigma$. With $x \dfn \int_\ell^y (1 / \sigma(z)) \ud z$, the first passage time of $Y$ with $Y_0 = y$ at level $\ell$ is equal to the first passage time of $X$ with $X_0 = x$ at level zero.
\end{rem}

\subsection{The representation}

The following assumption on the drift function in \eqref{eq: diffusion} will allow us to arrive at a very convenient representation for the density function $\pax$.
\begin{ass} \label{ass: basic}
The function $a$ restricted on $[0, \infty)$ is continuously differentiable, and satisfies 
\[
\int^\infty_0 \exp \pare{ - 2 \int^w_0 a(z) \ud z } \ud w = \infty.
\]
\end{ass}
In particular, under Assumption \ref{ass: basic}, $a$ is locally square integrable on $[0, \infty)$ and the function
\begin{equation} \label{eq: gamma}
\gamma  \dfn   \frac{a^2 + a'}{2}.
\end{equation}
is continuous and locally integrable.  The theory of one-dimensional diffusions ensures that for all $x \in (0, \infty)$ there exists a probability $\probax$ on $C(\Real_+, \Real)$ such that the coordinate process $X$ has dynamics given by \eqref{eq: diffusion}. Assumption \ref{ass: basic} also ensures that $X^{\tau_0}$, which is $X$
stopped at level zero, does not explode to infinity --- see, for example, \cite[Proposition 5.32 (iii)]{MR1121940}.

\begin{prop} \label{prop: p with BB3}
Suppose that Assumption \ref{ass: basic} is in force. On $C([0,1]; \, \Real^3)$, consider the probability $\probbb$ under which the coordinate process $\beta$ is a standard 3-dimensional Brownian bridge. Then,
\[
\pax(t)  =    q_{x} (t) \exp \left( - \int_0^x a(v) \ud v \right)  \expecbb \left[ \exp \left(  - t\int_0^1 \gamma \left( \left|  u x \e_1 + \sqrt{t} \beta_u \right|
\right) \ud u \right) \right]
\]
holds for all $t \in \Real_+$, where $\e_1 \dfn (1, 0, 0)^\top$ and $q_{x}$ is the density given for all $t \in \Real_+$ by
\begin{equation} \label{eq: dens 0}
q_x(t) \equiv p^{0}_{x} (t) = \frac{x}{\sqrt{2 \pi t^3}} \exp \pare{- \frac{x^2}{2 t}},
\end{equation}
corresponding to the first-passage time to zero of a standard Brownian motion starting from $x$.
\end{prop}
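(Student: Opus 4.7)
The plan rests on three well-known ingredients: Girsanov's theorem to strip off the drift, the classical identity expressing the law of a Brownian motion conditioned on its first passage time as a 3-dimensional Bessel bridge, and the realization of such a Bessel bridge as the Euclidean norm of a 3-dimensional Brownian bridge. Write $A(x) \dfn \int_0^x a(v) \ud v$ and let $\prob^0_x$ denote the Wiener measure started at $x$. Under $\prob^0_x$, an application of It\^o's formula to $A(X)$, combined with the identity $\gamma = (a^2 + a')/2$, shows that
\[
M_t \dfn \exp \pare{A(X_t) - A(x) - \int_0^t \gamma(X_s) \ud s}
\]
satisfies $\ud M_t = M_t\, a(X_t) \ud X_t$ and is therefore a positive local martingale. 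Assumption~\ref{ass: basic} guarantees that $X^{\tau_0}$ does not explode under $\probax$, so a standard localization argument promotes this to the Girsanov identity $\ud \probax / \ud \prob^0_x = M_{t \wedge \tau_0}$ on $\F_{t \wedge \tau_0}$. Since $X_{\tau_0} = 0$ and hence $A(X_{\tau_0}) = 0$, it follows that
\[
\probax \bra{\tau_0 \leq t} = \e^{-A(x)}\, \expec^0_x \bra{\exp \pare{- \int_0^{\tau_0} \gamma(X_s) \ud s} \indic_{\{\tau_0 \leq t\}}}.
\]
Differentiating in $t$ and disintegrating over the density $q_x$ of $\tau_0$ under $\prob^0_x$ yields the preliminary representation
\[
\pax(t) = q_x(t)\, \e^{-A(x)}\, \expec^0_x \bra{\exp \pare{- \int_0^t \gamma(X_s) \ud s} \bigm| \tau_0 = t}.
\]

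For the second ingredient, I would invoke the classical Williams-type identity (see, e.g., Revuz--Yor, Chapter XI): under $\prob^0_x$, the law of $(X_s)_{0 \leq s \leq t}$ conditional on $\tau_0 = t$ coincides with the law of a 3-dimensional Bessel bridge from $x$ to $0$ of length $t$, which is itself the Euclidean norm of a 3-dimensional Brownian bridge on $[0,t]$ from $x \e_1$ to the origin. Writing the latter explicitly in terms of a standard 3-dimensional Brownian bridge $\beta$ on $[0,1]$ via linear interpolation plus Brownian scaling, $Y_s = (1 - s/t)\, x \e_1 + \sqrt{t}\, \beta_{s/t}$, and substituting $u = s/t$, one gets
\[
\int_0^t \gamma(X_s) \ud s \stackrel{d}{=} t \int_0^1 \gamma \pare{ \abs{(1-u) x \e_1 + \sqrt{t}\, \beta_u} } \ud u.
\]
Since the standard Brownian bridge is invariant in law under the time-reversal $u \mapsto 1 - u$, the factor $(1-u)$ inside the integrand may be replaced by $u$ without altering the expectation, and combining this with the displayed formula for $\pax(t)$ above gives the claimed representation.

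The main technical obstacle is the rigorous justification of the Girsanov identity up to the stopping time $\tau_0$: one must promote the local-martingale identity for $M$ to a full Radon--Nikodym relation on $\F_{t \wedge \tau_0}$, and this is precisely where the non-explosion consequence of Assumption~\ref{ass: basic} must be used (via a sequence of hitting times of levels $n \uparrow \infty$ and a passage to the limit). The remaining steps are an assembly of classical identities, but the bookkeeping between the time parametrizations $s \in [0,t]$ and $u \in [0,1]$, and the invocation of time-reversal of $\beta$, must be handled carefully to land on the precise form stated in the proposition.
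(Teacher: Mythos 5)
Your proposal is correct and follows essentially the same route as the paper: Girsanov/It\^o to convert the drift into the multiplicative functional $\exp(-\int_0^{\tau_0}\gamma(X_s)\,\ud s)$, disintegration over the first-passage density $q_x$, the identity that the conditioned path is a three-dimensional Bessel bridge, and its realization as the norm of a three-dimensional Brownian bridge. The only (immaterial) differences are that you run the change of measure from Wiener measure toward $\probax$ rather than the reverse, and you time-reverse the Brownian bridge instead of the conditioned path.
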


\section{Monte-Carlo Density Estimation}

We now discuss issues related to estimation of the density $\pax$. For the purposes of this and the next section, we \emph{fix} a drift function $a$ satisfying Assumption \ref{ass: basic} and we write $\px$ for $\pax$ in order to simplify notation.

\subsection{Convergence}
It is clear how to get an estimate of the density $\px (t)$ for a given $t \in \Real_+$, at least in theory. One simulates $N$ independent paths of 3-dimensional Brownian bridge $\betah^1, \ldots, \betah^N$, and then defines the estimator $\phx (t)$ for $\px(t)$ via
\begin{equation} \label{eq: phx} 
\phx (t) \dfn q_x (t) \exp \left( - \int_0^x a(v) \ud v \right) 
\frac{1}{N} \sum_{i = 1}^N \exp \left(  - t\int_0^1 \gamma \left( \left|  u x \e_1 + \sqrt{t} \betah^i_u \right|
\right) \ud u \right),
\end{equation}
where recall that $q_x$ is given in \eqref{eq: dens 0}. By the strong law of large numbers, the estimator $\phx (t)$ converges almost surely to the true density $\px (t)$ as $N$ goes to infinity for each fixed $t \in \Real_+$. Moreover, the estimator $ \phx(t) $ is unbiased\footnote{Of course, $ \expecbb \bra{\phx(t)} = \px(t)$ only holds if we assume that we actually have the whole path of each Brownian bridge simulated exactly, which is not possible in practice. However, we can simulate exactly discretized paths of the Brownian bridge, and then can easily estimate the order of bias from the Riemann approximation of the integral. In this respect, see also \S \ref{subsec: practical issues}.} and  the variance of the estimator $ \phx(t) $ decreases in the order of $ 1 / N$, for every fixed $ t \in \Real_{+} $, as a direct consequence of (\ref{eq: phx}). 
In order to get weak convergence of the whole empirical densities $\pare{\phx(t)}_{t \in \Real_+}$, as well as the uniform rate of convergence over compact time-intervals, we introduce an additional assumption.

\begin{ass} \label{ass: extra}
Together with Assumption \ref{ass: basic}, we ask that the function $\gamma$ of \eqref{eq: gamma} is such that $\inf_{z \in \Real_+} \gamma(z) > -\infty$, as well as that $a^{\prime}$ is locally Lipschitz continuous on $\Real_+$ with Lipschitz constant growing at most polynomial rate, that is, there exist constants $ c_1 > 0$, $ c_2 > 0$, 
$n \in \Natural$, such that
\begin{equation} \label{eq: poly growth}
\sup_{0 \le v_1 < v_2 \le \kappa} \abs{ \frac{a^\prime (v_2 ) - a^\prime(v_1)}{v_2 - v_1} }
\le (c_1 + c_2 \kappa^n)  \text{ holds for all } \kappa > 0. 
\end{equation}
\end{ass}

The next two results are concerned with a central limit theorem for the whole density function estimator as well as the uniform rate of convergence on compact intervals of $ \Real_+$.

\begin{prop}\label{prop: tight}
Suppose that Assumption \ref{ass: extra} holds. For $N \in \Natural$, define $\eta^{N} \dfn \sqrt{N} \pare{\phx - \px} $. Then, the family of stochastic processes $\set{ \eta^{N} \such N \in \Natural}$ is tight.
As $N\to \infty$, $\eta^N$ converges weakly to a centered Gaussian process with continuous covariance function $\Gamma$, where, with $I(t) =
\int_0^1 \gamma \left( \left|  u x \e_1 + \sqrt{t} \beta_u \right|
\right) \ud u$ for $t \in \Real_+$, 
\[
\Gamma (s, t) = p_x(s)p_x(t) \exp \Big( -2 \int^x_0 a(v) \ud v\Big)  \covbb [\exp (-sI(s)), \exp (-t I(t))],  \quad (s,t) \in \Real_+^2.  
\]
\end{prop}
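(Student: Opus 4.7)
The plan is a classical functional central limit theorem for the normalized sum of iid centered continuous processes
\[
\xi^i(t) \dfn C_x \, q_x(t) \pare{\exp(-t I^i(t)) - \expecbb \bra{\exp(-t I(t))}}, \quad C_x \dfn \exp \pare{ -\int_0^x a(v)\,\ud v},
\]
where $I^i(t) \dfn \int_0^1 \gamma(|u x \e_1 + \sqrt{t}\, \betah^i_u|)\,\ud u$, so that $\eta^N(t) = N^{-1/2}\sum_{i=1}^N \xi^i(t)$. I would then verify the two standard ingredients for weak convergence in $C([0,T];\Real)$ (for arbitrary $T>0$): (i) convergence of finite-dimensional distributions, and (ii) tightness.

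For (i), the Cram\'er-Wold device reduces matters to the multivariate iid central limit theorem. Assumption \ref{ass: extra} supplies $\inf_{z \in \Real_+} \gamma(z) > -\infty$, so $\exp(-t I^i(t)) \leq \exp \pare{-t \inf_{z \in \Real_+}\gamma(z)}$ is a deterministic bound, whence all moments of $\xi^i(t)$ are finite. The covariance of the limiting Gaussian vector is $\Gamma(s,t) = \covbb(\xi^1(s), \xi^1(t))$, which after expansion recovers the formula in the statement.

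For (ii), the iid centered structure gives $\expecbb|\eta^N(t) - \eta^N(s)|^2 = \expecbb|\xi^1(t) - \xi^1(s)|^2$, so the Kolmogorov tightness criterion will follow once I prove $\expecbb|\xi^1(t) - \xi^1(s)|^2 \leq C_T|t-s|^2$ uniformly on $[0,T]$. Writing $\xi^1(t) - \xi^1(s) = \int_s^t \partial_u \xi^1(u)\,\ud u$ and invoking Cauchy-Schwarz, this reduces further to establishing $\sup_{u \in [0,T]}\expecbb|\partial_u \xi^1(u)|^2 < \infty$. A direct computation yields
\[
\partial_u \bra{\exp(-u I^1(u))} = -\exp(-u I^1(u)) \pare{I^1(u) + u\, I^{1\prime}(u)}, \quad I^{1\prime}(u) = \int_0^1 \gamma^\prime(|\rho^1_v(u)|)\,\frac{\inner{\rho^1_v(u)}{\betah^1_v}}{2\sqrt{u}\,|\rho^1_v(u)|}\,\ud v,
\]
where $\rho^1_v(u) \dfn v x \e_1 + \sqrt{u}\, \betah^1_v$.

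The main obstacle is dominating $\gamma^\prime(|\rho^1_v(u)|)$ in $L^2$, and this is precisely where Assumption \ref{ass: extra} does the work. The polynomial-rate local Lipschitz property of $a^\prime$ propagates, via $\gamma^\prime = a a^\prime + a^{\prime\prime}/2$, into a polynomial bound $|\gamma^\prime(z)| \leq c_1 + c_2 z^m$ for some integer $m$. Combined with the sub-Gaussian tails of $\|\betah^1\|_\infty$ (the three-dimensional Brownian bridge on $[0,1]$ has exponential moments of its supremum) and the smooth, rapidly vanishing behaviour of $q_x$ and $q_x^\prime$ as $u \downarrow 0$, this delivers the required uniform $L^2$-control on $\partial_u \xi^1(u)$: the $1/\sqrt{u}$ singularity inside $I^{1\prime}$ is absorbed by the factor $u$ multiplying it, while any remaining blow-up as $u \downarrow 0$ is swamped by $q_x(u)$'s rapid decay. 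Tightness follows, and together with (i) identifies the weak limit as the continuous centered Gaussian process with covariance $\Gamma$ as displayed.
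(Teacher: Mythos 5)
Your overall architecture coincides with the paper's: write $\eta^N$ as a normalized sum of iid centered continuous processes, get the finite-dimensional distributions from the multivariate CLT (your covariance $C_x^2 q_x(s)q_x(t)\covbb[\exp(-sI(s)),\exp(-tI(t))]$ is the correct one), and obtain tightness from the Kolmogorov-type moment bound $\expecbb|\xi^1(t)-\xi^1(s)|^2 \leq C_T|t-s|^2$. The difference, and the gap, is in how you propose to prove that increment bound.

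You differentiate $u \mapsto \exp(-uI^1(u))$ and write $I^{1\prime}(u)$ in terms of $\gamma'$, identifying $\gamma' = a a' + a''/2$. But Assumption \ref{ass: extra} only requires $a'$ to be \emph{locally Lipschitz} with polynomially growing Lipschitz constant; it does not grant the existence of $a''$, so $\gamma$ need not be differentiable and your displayed formula for $I^{1\prime}$ is not available. Even if you invoke Rademacher's theorem to get $a''$ a.e.\ with local $L^\infty$ bounds, you would still need to justify that $u \mapsto \gamma(|\rho^1_v(u)|)$ is absolutely continuous, that differentiation can be interchanged with the $\ud v$ integral, and that the fundamental theorem of calculus applies to $\xi^1$ down to $u=0$ where the $1/\sqrt{u}$ appears --- none of which you address. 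The paper's Lemma \ref{lem: Lip I} avoids all of this by working with difference quotients: the Lipschitz modulus of $a'$ yields polynomial bounds $|\gamma(v_1)-\gamma(v_2)| \leq \varphi_2(\kappa)|v_1-v_2|$ on $[0,\kappa]$, and the elementary inequality $\bigl||ux\e_1+\sqrt{t}\beta_u| - |ux\e_1+\sqrt{s}\beta_u|\bigr| \leq (\sqrt{t}-\sqrt{s})|\beta_u| \leq \tfrac{t-s}{\sqrt{t}+\sqrt{s}}|\beta_u|$ then gives the pathwise bound $|\xi(t)-\xi(s)| \leq \Phi_T|t-s|$ with $\Phi_T$ a polynomial in $\sqrt{T}\max_u|\beta_u|+x$, hence with all moments finite. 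Your argument either needs the extra hypothesis $a \in C^2$ or should be replaced by this direct Lipschitz estimate; as written it does not close under the stated assumptions.
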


\begin{prop}\label{prop: conv in prob}
Under Assumption \ref{ass: extra}, for any fixed $T \in \Real_+$ the sequence
\[
\pare{\sqrt{N}  \max_{t \in [0, T]} \abs{\phx (t)
- \px(t)}}_{N \in \Natural}
\]
 is bounded in probability.
\end{prop}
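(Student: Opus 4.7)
The plan is to deduce this statement essentially as a corollary of Proposition \ref{prop: tight}, with the supremum functional playing the starring role.

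First, I would interpret the tightness statement in Proposition \ref{prop: tight} as tightness of the laws of $\eta^N$ on the Polish space $C([0, T]; \Real)$ equipped with the topology of uniform convergence. This is the natural setting: both $\phx$ and $\px$ are continuous in $t$ on $\Real_+$ (note that $q_x(t) \to 0$ as $t \downarrow 0$, and the exponential factor is continuous by dominated convergence given the lower boundedness of $\gamma$ from Assumption \ref{ass: extra}), so each realization of $\eta^N$ belongs to $C([0, T]; \Real)$.

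Second, I would apply the continuous mapping theorem with the functional $\Phi \colon C([0, T]; \Real) \to \Real_+$ defined by $\Phi(f) \dfn \max_{t \in [0, T]} |f(t)|$. This map is $1$-Lipschitz with respect to the uniform norm, hence continuous. By Proposition \ref{prop: tight}, $\eta^N$ converges weakly in $C([0, T]; \Real)$ to the centered Gaussian process $\eta$ with covariance $\Gamma$, and the continuous mapping theorem gives
\[
\Phi(\eta^N) \, = \, \max_{t \in [0, T]} \abs{\eta^N(t)} \; \Longrightarrow \; \max_{t \in [0, T]} \abs{\eta(t)} \, = \, \Phi(\eta), \quad N \to \infty.
\]
Since $\eta$ is the weak limit of processes with continuous sample paths in $C([0, T]; \Real)$, its law is supported on $C([0, T]; \Real)$ as well, so $\Phi(\eta)$ is almost surely finite. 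Weak convergence to an almost surely finite random variable implies tightness of the real-valued sequence, which is exactly boundedness in probability, yielding the claim since $\Phi(\eta^N) = \sqrt{N} \max_{t \in [0, T]} |\phx(t) - \px(t)|$.

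The substantive work has already been absorbed into Proposition \ref{prop: tight}, so the only subtlety here is ensuring that ``tightness'' in that statement is understood in the path-space $C([0, T]; \Real)$ and not merely in the sense of finite-dimensional distributions. This is the standard convention and would be verified in the Appendix via a Kolmogorov-type moment estimate on the increments $\eta^N(t) - \eta^N(s)$, uniformly in $N$; the polynomial growth condition \eqref{eq: poly growth} on $a^\prime$ and the lower bound on $\gamma$ are precisely what power such an estimate. Given that this ingredient is already part of Proposition \ref{prop: tight}, no genuinely new obstacle arises in the present proposition.
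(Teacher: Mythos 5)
Your proof is correct and follows essentially the same route as the paper: weak convergence of $\eta^N$ in the path space $C([0,T];\Real)$ (from Proposition \ref{prop: tight}) combined with the continuous mapping theorem for the maximum functional, and almost-sure finiteness of $\max_{t \in [0,T]}|\eta(t)|$ for the limiting Gaussian process $\eta$. The only difference is that the paper additionally invokes Fernique's inequality to control the tail of this maximum, whereas you observe---correctly---that its almost-sure finiteness already follows from the limit law being supported on $C([0,T];\Real)$, so the quantitative Gaussian tail bound is not needed for the qualitative conclusion.
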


\begin{rem}
In a similar manner, for fixed $x_1$ and $x_2$ in $(0, \infty)$ with $x_1 < x_2$, we may show that
\[
\pare{\sqrt{N} \max_{x \in [x_1, x_2]} \abs{\phx (t) - \px(t)}}_{N \in \Natural} 
\]
is bounded in probability. Moreover, under some additional conditions on differentiability of $a$, we may estimate the partial  derivatives of $\px(t)$  with respect to $(x, t)$ via differentiating the estimator with respect to the variable of interest. 
\end{rem}

\section{The Rate Function}

Recall that we are dropping the qualifying ``$a$'' from ``$\pax$'' in order to simplify notation. Define implicitly the function $\lgx$ via
\[
\px (t) = q_{x} (t) 
 \exp \pare{ -  \int^{x}_{0} a(v) \ud v } \exp \pare{ - t \lgx (t)}, \quad t \in \Real_+.
\]
In other words, and in view of Proposition \ref{prop: p with BB3}, we have
\begin{equation} \label{eq: lgx}
\lgx (t)  \dfn - \frac{1}{t} \log \left( \expecbb \left[ \exp \left(  - t\int_0^1 \gamma \left( \left|  u x \e_1 + \sqrt{t} \beta_u \right|
\right) \ud u \right) \right] \right), \text{ for } t \in \Real_+.
\end{equation}

\subsection{Theoretical results}

Proposition \ref{prop: conv in prob} ensures the uniform convergence of the estimator on finite intervals $[0, T]$ for fixed $T \in (0, \infty) $. Of course, it is almost always the case that $\lim_{t \to \infty} \px(t) = 0$. 
For large $t \in \Real_+$, $\lgx(t)$ gives a better understanding of the behavior of the density function, as it represents in a certain sense the exponential decrease of $\px(t)$; therefore, it makes more sense to focus on $\lgx$ rather than $\px$. In fact, the following result implies that the function $\lgx$ is frequently bounded on $\Real_+$ --- this is the case, for example, when $\gamma$ is bounded from below.

\begin{prop}\label{lm: bounds lambda}
Let Assumption \ref{ass: basic} be valid. Then,
\begin{equation} \label{eq: estimate lgx limit}
\inf_{z\in \Real_+} \gamma (z) \le \inf_{t \in \Real_+}\lgx (t)
\le \limsup_{t\to \infty} \lgx (t)
\le \inf_{\kappa>0} \left\{ m(\kappa  +x ) -
\frac{\pi^2}{2\kappa^2} \right\}, 
\end{equation}
where $m(w) = \max_{0\le z \le w} \gamma (z) $ for $w > 0$. Furthermore,
if $\gamma$ is bounded from below, then
\begin{equation} \label{eq: lgx at zero}
\lim_{t\downarrow 0} \lgx (t) = \frac{1}{x} \int^{x}_{0} \gamma (u) \ud u
= \int^{1}_{0} \gamma (u x) \ud u.
\end{equation}
\end{prop}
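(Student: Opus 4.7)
Throughout, set $I_t := \int_0^1 \gamma(|ux\e_1 + \sqrt{t}\beta_u|)\ud u$, so that $\lgx(t) = -t^{-1}\log \expecbb[\exp(-tI_t)]$. The leftmost inequality in \eqref{eq: estimate lgx limit} is immediate: pointwise, $I_t \geq \gamma_* := \inf_{z \in \Real_+}\gamma(z)$, whence $\expecbb[\exp(-tI_t)] \leq \exp(-t\gamma_*)$ and $\lgx(t) \geq \gamma_*$ for every $t \in \Real_+$. The middle inequality $\inf_t \leq \limsup_t$ is tautological.

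For the rightmost inequality in \eqref{eq: estimate lgx limit}, my plan is to lower-bound the expectation by restricting to a small-ball event for the bridge. Fix $\kappa > 0$ and set $A_t := \{\sup_{u \in [0,1]}|\beta_u| \leq \kappa/\sqrt{t}\}$; on $A_t$ the triangle inequality gives $|ux\e_1 + \sqrt{t}\beta_u| \leq x + \kappa$ for every $u$, so $I_t \leq m(x+\kappa)$ there, and hence
\[
\expecbb[\exp(-tI_t)] \;\geq\; \exp(-t\,m(x+\kappa))\,\probbb[A_t].
\]
The key analytic input is the small-ball asymptotic for the three-dimensional Brownian bridge,
\[
\lim_{\eps \downarrow 0} \eps^2 \log \probbb\bra{\sup_{u \in [0,1]}|\beta_u| \leq \eps} \;=\; -\frac{\pi^2}{2},
\]
where $\pi^2/2$ is the first Dirichlet eigenvalue of $-\tfrac12 \Delta$ on the unit ball in $\Real^3$; this is classical and can be obtained from the corresponding three-dimensional Brownian motion result by conditioning on $W_1 = 0$ and spectral decomposition of the Dirichlet heat kernel. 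Substituting $\eps = \kappa/\sqrt{t}$ gives $\log \probbb[A_t] = -\pi^2 t/(2\kappa^2) + o(t)$ as $t \to \infty$; combining this with the displayed lower bound on $\expecbb[\exp(-tI_t)]$, taking $-t^{-1}\log$, then $\limsup$ in $t$, and finally infimum in $\kappa > 0$, delivers the third inequality of \eqref{eq: estimate lgx limit}.

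For the boundary limit \eqref{eq: lgx at zero}, continuity of $\gamma$ yields $I_t \to I_0 := \int_0^1 \gamma(ux)\ud u$ almost surely as $t \downarrow 0$. Replacing $\gamma$ by $\gamma - \gamma_*$ (which shifts $\lgx$ by $-\gamma_*$) reduces matters to $\gamma \geq 0$. Jensen's inequality then gives $\lgx(t) \leq \expecbb[I_t]$, and a dominated-convergence argument based on truncating $\{\sup_u |\beta_u| \leq K\}$ --- on which $\gamma(|ux\e_1 + \sqrt{t}\beta_u|) \leq \max_{[0,\, x + \sqrt{t}K]}\gamma$, finite by continuity of $\gamma$ --- followed by $K \to \infty$ yields $\expecbb[I_t] \to I_0$, hence $\limsup_{t\downarrow 0}\lgx(t) \leq I_0$. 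For the matching $\liminf$, the elementary bound $\exp(-y) \leq 1 - y + \tfrac12 y^2$ for $y \geq 0$ applied pointwise to $y = tI_t$ gives
\[
\expecbb[\exp(-tI_t)] \;\leq\; 1 - t\,\expecbb[I_t] + \tfrac{t^2}{2}\,\expecbb[I_t^2];
\]
provided $\expecbb[I_t^2]$ stays bounded as $t \downarrow 0$ (established by the same truncation), taking logarithms and dividing by $-t$ yields $\liminf_{t\downarrow 0}\lgx(t) \geq \liminf_{t\downarrow 0} \expecbb[I_t] = I_0$, completing \eqref{eq: lgx at zero}.

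The main technical obstacle is the small-ball probability asymptotic for the three-dimensional Brownian bridge: it is classical but non-elementary, and is the only ingredient requiring genuine analysis beyond elementary probability manipulations. A secondary concern is the handling of potentially unbounded $\gamma$ in the $t \downarrow 0$ limit, where the sub-Gaussian tails of $\sup_u |\beta_u|$ under $\probbb$ provide the integrability that makes the truncation arguments routine.
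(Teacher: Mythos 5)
Your proof of the chain \eqref{eq: estimate lgx limit} is correct and follows essentially the same route as the paper: the lower bound via $I_t \geq \inf\gamma$ is identical, and for the upper bound both arguments restrict the expectation to the event $\{\sup_{u}|\beta_u| \leq \kappa t^{-1/2}\}$ and then need the exponential decay rate of that probability. The only difference there is in how that rate is obtained: the paper quotes the explicit series for the distribution of the maximum of the three-dimensional Bessel bridge and reads off the leading term $\exp(-\pi^2 t/(2\kappa^2))$, whereas you invoke the classical small-ball asymptotic $\eps^2\log\probbb[\sup_u|\beta_u|\leq\eps]\to-\pi^2/2$; these are the same fact packaged differently, and your identification of the constant as the first Dirichlet eigenvalue of $-\tfrac12\Delta$ on the unit ball of $\Real^3$ is right. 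One remark applying equally to both derivations: the bound actually obtained is $m(\kappa+x)+\pi^2/(2\kappa^2)$, so the minus sign in the displayed statement \eqref{eq: estimate lgx limit} is a typo (a minus sign would even allow the right-hand side to drop below $\inf\gamma$). Where you genuinely depart from the paper is \eqref{eq: lgx at zero}: the paper applies de L'H\^ospital's rule to $t\mapsto\log\expecbb[\exp(-tI_t)]$, implicitly differentiating under the expectation, while you sandwich $\lgx(t)$ between $\expecbb[I_t]$ (Jensen) and $\expecbb[I_t]+\Oh(t)$ (the bound $e^{-y}\leq 1-y+y^2/2$), which is more elementary and avoids justifying the interchange of derivative and expectation. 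The price is the same in both cases: your argument needs $\expecbb[I_t]\to I_0$ and $\expecbb[I_t^2]$ bounded near $t=0$, and your appeal to the sub-Gaussian tails of $\sup_u|\beta_u|$ covers this only when $\gamma$ grows at most like $e^{cv^2}$ --- Assumption \ref{ass: basic} alone does not guarantee that. But the paper's own bounded-convergence/L'H\^ospital step carries exactly the same unstated integrability requirement, so this is a shared gap rather than a defect of your approach.
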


\begin{rem}\label{rem: lgx limit} 
The inequalities (\ref{eq: estimate lgx limit}) only imply bounds for the inferior and superior limit of $\lgx (t)$ as $t \to \infty$. In fact, it is expected that $\limt \lgx(t)$ exists, possibly except in pathological cases. Let us argue below for this point on a rather loose and intuitive level.

The stopping time $\tau_0$ can be approximated by the sequence  $(\tau_0^n)_{\nin}$, where, for all $\nin$, $\tau_0^n \dfn \inf \{ t \ge 0 \such X_t \not \in (0, n) \}$  is the first exit time of $X$ from the 
interval $(0,n)$. Therefore, the rate function $\lgx$ may be approximated by the corresponding rate functions corresponding to $\tau_0^n$, $\nin$. It follows from \cite{MR0068701} that the density function 
$p_{x}^{n}$ of $\tau_0^n$ has the eigenvalue 
expansion (see also \cite{MR576891}, \cite{MR1326606} for similar  problems) 
\[
p_{x}^{n}(t) = \frac{\partial}{\partial t} \, \probax \bra{
\tau_0^n \leq t } = \sum_{k=1}^\infty 
e^{-\mu^n_k \, t} \, \psi^n_k (x), \text{ for } (t, x) \in \Real_+ \times (0, n), \ \nin,  
\] 
for some functions $\{ \psi^n_k \such k \in \Natural \}$ 
computed from the eigenfunctions $\{ \varphi^n_k \such k \in \Natural \}$ and the corresponding eigenvalues $ 0 < \mu^n_1 < \mu^n_2 < \ldots$ of the Dirichlet problem 
\begin{equation} \label{eq: eigenvalue prob}
\frac{1}{2} \varphi''(z) + a(z) \varphi'(z) = - \mu  \,   
\varphi(z)  \, \quad 
\text{ for } z \in (0, n), 
\end{equation}
for $\varphi \in C^{2, \alpha}([0,n], \Real_+)$ with 
$\lim_{z \to 0} \varphi(z)= 0 = 
\lim_{z \to n} \varphi(z)$. Thus, the limit as $t \to \infty$ for the rate function of $\tau_0^n$ is exactly the principal eigenvalue 
$\mu^n_1$: 
\[
\lim_{t\to \infty} \lambda_{x}^{n} (t) = - \lim_{t\to \infty} \frac{1}{t} \log \Big( \frac{p^{n}_{x}(t)}{q_x(t)}\Big) = \mu^n_1 \text{ for all } 
\nin,  
\]
which does not depend on the initial value $x > 0$. Since $\tau_0 = \lim_{n \to \infty} \tau_{0,n}$, it is conjectured that the limit of $\lambda_{x} (t)$ as $t \to \infty$ actually exists and is equal to $\lim_{n \to \infty} \mu^n_1$. A thorough study of finding a reasonable sufficient conditions for
\[
\lim_{t \to \infty} \lambda_x (t) = \lim_{n \to \infty} \mu^n_1 = \lim_{n \to \infty} \lim_{t\to \infty} \lambda_{x}^{n}(t)
\]
to hold for $x > 0$ lies beyond the scope of this paper. 
\end{rem}

\subsection{Practical issues} \label{subsec: practical issues}
In view of Proposition \ref{prop: tight} and Proposition \ref{prop: conv in prob}, the estimator $ (\phx(t))_{t \in \Real_{+}} $ of (\ref{eq: phx}) convergences uniformly with rate $1 / \sqrt{N}$ over compact time-intervals. In practice, the computation of (\ref{eq: phx}) is implemented by generating a standard 3-dimensional Brownian bridge, which is simulated in an exact way over a thin enough grid. The approximation error for the Riemann integral over the {\it finite} interval $[0, 1]$ in (\ref{eq: phx}) can be controlled very efficiently. More precisely, the numerical computation of the exponential functional of the Brownian bridge in (\ref{eq: phx}) can be carried out using the fourth-order Runge-Kutta scheme which is proposed and analyzed in \cite{MR2061247}. Under appropriate mild regularity conditions on the function $\gamma$, it is shown that this numerical scheme is weak order of convergence $4$. For this numerical issue, consult the original paper \cite{MR2061247} and the related monographs \cite{MR1999614}, \cite{MR1214374} and \cite{MR2069903}.

A potential problem with our estimator $ (\phx(t))_{t \in \Real_{+}} $ can arise for {\it large} $ t $, that is, the density function at the tail. Note that what is meant here is that the \emph{relative} error of the estimator of $\px(t)$ tends to be large; the absolute error tends to be extremely small, as $\px(t)$ is very close to being zero for large $t \in \Real_+$. To visualize the issue, it is helpful to study by experiment the large-time behavior of the rate function (\ref{eq: lgx}) when $X$ is an Ornstein-Uhlenbeck (OU) process starting with $x = 1$. Here  $a(z) =  -z$ and $ \gamma(z) = (z^2-1) / 2$ for $z \in  \Real_+$. The first-passage time density of this OU process is known analytically (see, for example, \cite[equation (8)]{GJY03}) and reads 
\begin{equation} \label{eq: OU density}
p_1 (t) = \frac{1}{\sqrt{2\pi}} \frac{1}{\sinh^{3/2} (t)} \exp 
\Big( \frac{1+t-\coth(t)}{2} \Big), \text{ for } t \in \Real_+. 
\end{equation}
\begin{figure} 
\begin{center}
\begin{tabular}{cc}
\includegraphics[scale=0.4]{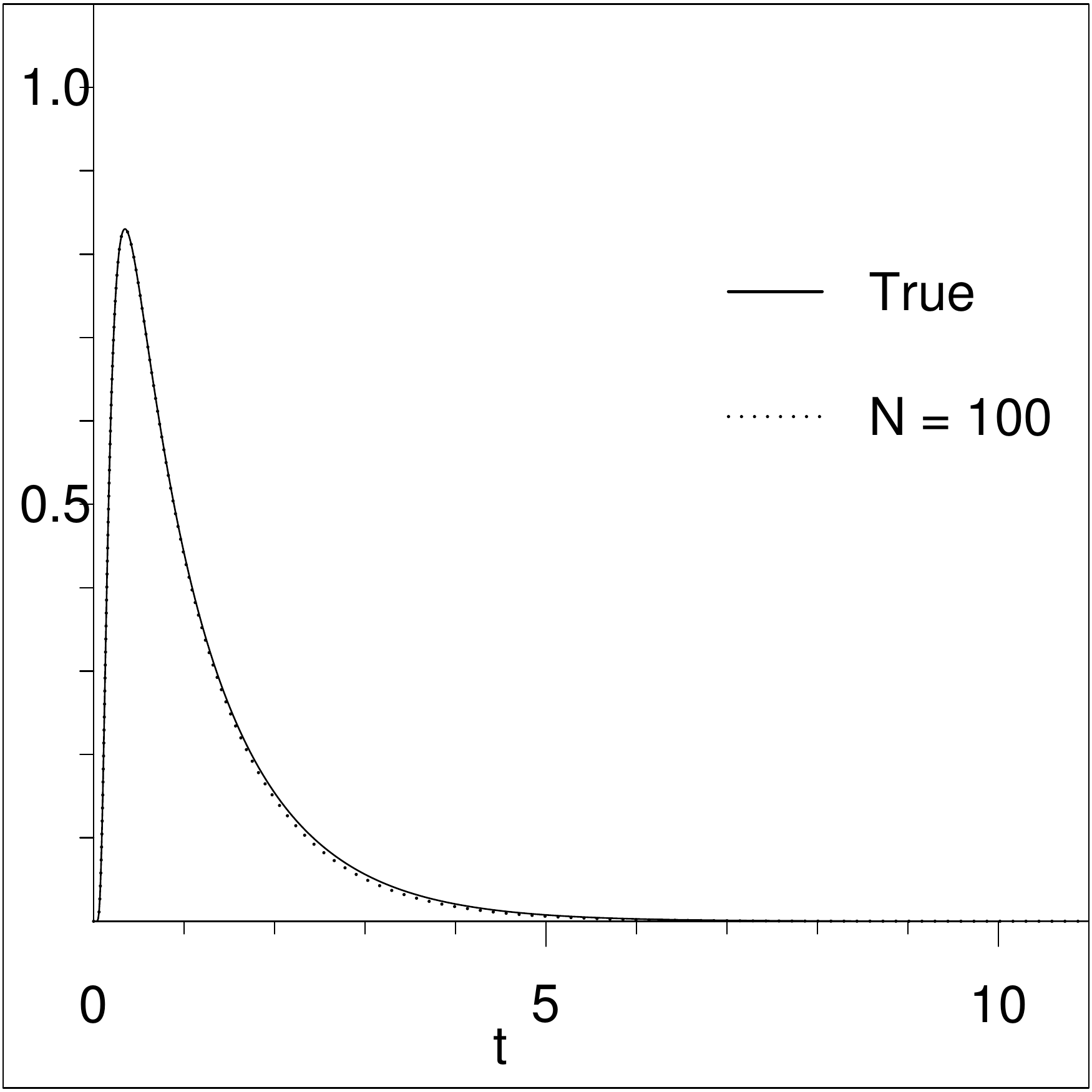}
&
\includegraphics[scale=0.4]{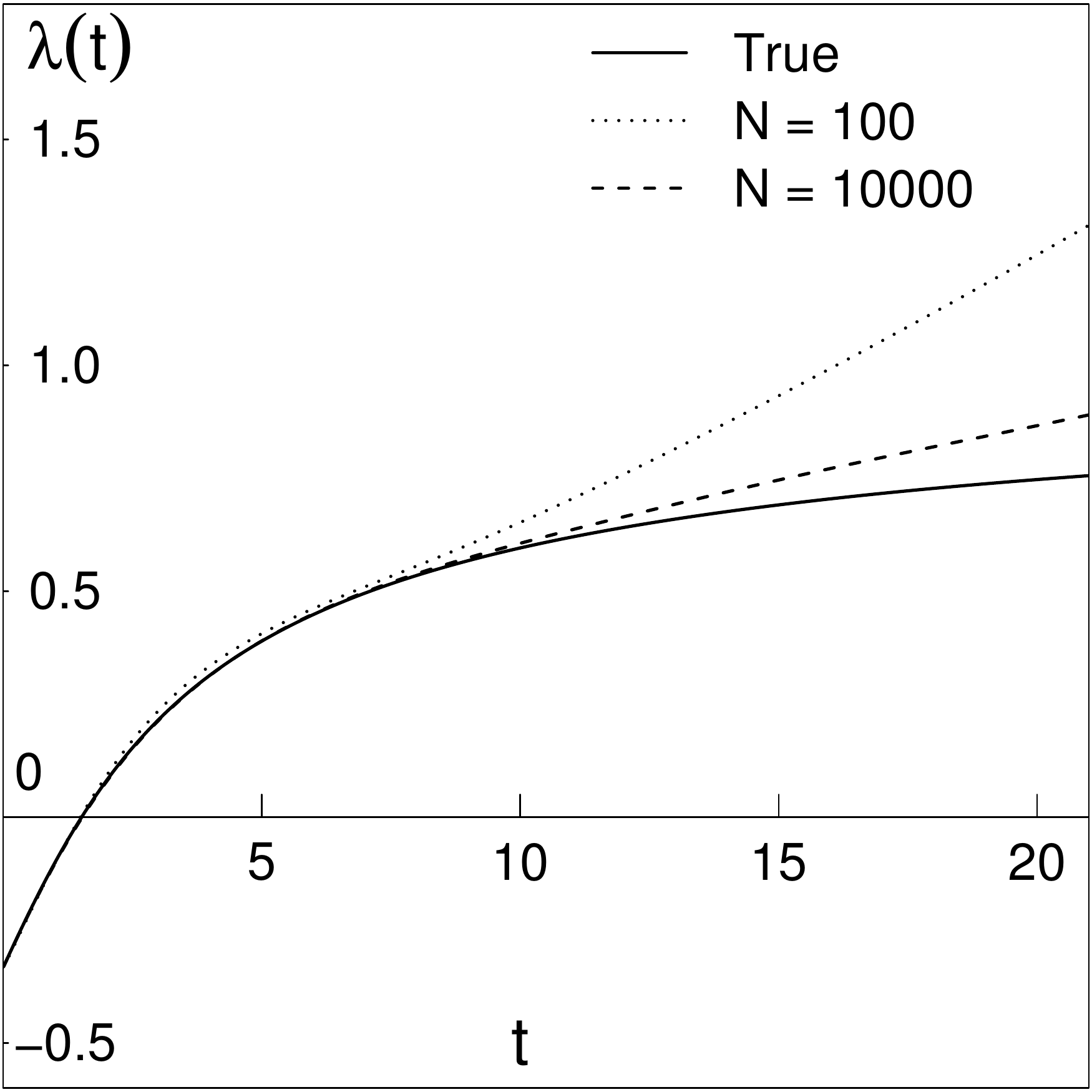} \\
(a) Density functions $ \px$ 
and $ \phx$. & 
(b) Rate functions  $ \lgx $ and 
$ \widehat{\lambda}^{N}_{x}$. \\
\end{tabular}
\end{center}
\caption{Comparisons of the simulated 
density and rate functions 
in (\ref{eq: phx}) and (\ref{eq: l est})
with theoretical values for 
the OU process 
with $ a(z) = -z$ ($z \in \Real_+$) and $ x = 1$.}
\label{fig: 1}
\end{figure}

The true density (\ref{eq: OU density}) and the estimated density (\ref{eq: phx}) with $N= 100$ simulations are shown over interval $ [0, 10]$ 
in Figure \ref{fig: 1}(a). Note that even with this small number of simulations, the two curves are almost indistinguishable. Figure \ref{fig: 1}(b) contains graphs of the more refined rate functions. In this scale, one can see that the estimation of the exponential rate of decay of the density for large $t$ is not as good. (As can be seen from \ref{fig: 1}(b), on the interval $ [0,7] $, the true and estimated rate functions almost coincide. However, on the interval $ [7, 20]$, the estimated rate functions $ \widehat{\lambda}^N_x$ are much larger than the true rate function $ \lgx$, which implies that the estimator (\ref{eq: phx}) underestimates the tail probability.) In fact, the estimated asymptotic rate seems to increase linearly instead of converging to a finite limit.  This becomes clear once one notes that, in this OU example,
\[
- t \int_0^1 \gamma \left( \left|  u x \e_1 + \sqrt{t} \beta_u \right|
\right) \ud u = t^2\frac{1}{2} \int_0^1 |\beta_u|^2 \ud u + t^{3/2} x \int_0^1 \inner{\e_1}{\beta_u} \ud u + t \frac{x^2}{6};
\]
therefore, the estimator for $\lgx$ for sample size $N \in \Natural$ becomes
\begin{equation} \label{eq: l est}
\widehat{\lambda}^{N}_{x} (t)  = \frac{x^2}{6} - \frac{1}{t} \log \left( 
\frac{1}{N} \sum^{N}_{i=1}  \exp \left(  t^2\frac{1}{2} \int_0^1 |\widehat{\beta}^i_u|^2 \ud u + t^{3/2} x \int_0^1 \inner{\e_1}{\widehat{\beta}^i_u} \ud u  \right) \right).
\end{equation}
Observe that the leading term in the estimator of $\lgx$ will be increasing linearly in $t$.
 
In order to overcome this poor situation in the tail, one can use a mixture method combining the estimator (\ref{eq: phx}) on the finite interval $[0, T]$ and an estimator for the tail probability of the form
\[
c_\ast q_x(t) \exp ( - \lambda t) ,
\text{ for } t \in [T, \infty),
\]
for some choice of large threshold $T$, where $\lambda$ is the principal eigenvalue of the Dirichlet problem \eqref{eq: eigenvalue prob}
for some choice of large threshold $n$,
and $c_\ast$ is chosen so that the density estimator is continuous. The principal eigenvalue can be numerically computed from the Sturm-Liouville problem
\[
- \Big[ \exp \Big( 2 \int^{z}_{0} a(u) \ud u\Big) \,
\varphi^{\prime}(z) \Big]^{\prime}
= 2 \lambda \exp \Big( 2 \int^{z}_{0} a(u) \ud u \Big) \varphi(z) \,
\quad \text{ for } z \in [0,n] \,
\]
(see \cite{MR2170950}) with the same Dirichlet boundary conditions,
by use of either the variational method or the Liouville transform method.
In the variational method, the principal eigenvalue
is obtained by minimizing numerically the corresponding
Rayleigh quotient. The Liouville transform method turns
the Sturm-Liouville equation into a Schr\"odinger-type equation, which is then numerically solvable with discrete approximation. Both numerical
methods are well studied --- see \cite{MR1460546} and their references within.

\appendix

\section{Proofs} \label{sec: proofs}

\subsection{Proof of Proposition \ref{prop: p with BB3}}

Consider the non-negative $\probax$-supermartingale 
\begin{equation} \label{eq: def Z} 
Z \dfn \exp  \Big( - \int^{\cdot \wedge \tau_0}_{0} a(X_{u})  \ud W_{u}- \frac{1}{2} \int^{\cdot \wedge \tau_0}_{0} a^{2}(X_{u})  \ud u \Big).
\end{equation}
As follows from a modification of \cite[Exercise 5.5.38]{MR1121940} for the restricted state space $ [0, \infty)$, $Z$ is a $ \probax$-martingale. With $\qprob_x$ being the probability on $C(\Real_+; \Real)$ that makes the coordinate process $ X $ behave like a Brownian 
motion staring from $x$ and stopped when it reaches level zero, Girsanov's theorem implies that
\begin{equation} \label{eq: def Q}
\frac{\ud  \qprob_x}{\ud  \probax} \Big|_{\F_t} = Z_{t}.  
\end{equation}
for all $t \in \Real_+  $.  
Moreover, since $ X_{\tau_{0}} = 0 $, It\^o's formula (under $\qprob_x$) implies that, on the set $\{ \tau_{0} < \infty \}$, 
\[
- \int^{x}_{0} a(v)  \ud v = \int^{X_{\tau_{0}}}_{x} a(v)  \ud v = \int^{\tau_{0}}_{0} a(X_{u})  \ud X_{u} + \frac{1}{2} \int^{\tau_{0}}_{0} a^{\prime} (X_{u}) \ud u  . 
\]
Combining this with (\ref{eq: diffusion}) and (\ref{eq: gamma}), the stochastic exponential defined in 
(\ref{eq: def Z}) under $ \qprob_x$ satisfies 
\begin{equation} \label{eq: Z inv}
\begin{split}
\frac{1}{Z_{\tau_{0}}} &= \exp  \Big( \int^{\tau_{0}}_{0} a(X_{u})  
[ \ud X_{u} - a(X_{u}) \ud u]  + \frac{1}{2} \int^{\tau_{0}}_{0} a^{2}(X_{u})  \ud u \Big) \\
&= \exp  \Big( \int^{\tau_{0}}_{0} a(X_{u})  
\ud W^{\qprob_x}_{u}   + \frac{1}{2} \int^{\tau_{0}}_{0} a^{2}(X_{u})  \ud u \Big) \\
&= \exp  \Big( - \int^{x}_{0} a(v)  \ud v - \frac{1}{2} \int^{\tau_{0}}_{0} a^{\prime} (X_{u}) \ud u - \frac{1}{2} \int^{\tau_{0}}_{0} a^{2}(X_{u})  \ud u \Big) \\
& =  \exp  \Big( - \int^{x}_{0} a(v)  \ud v  - \int^{\tau_{0}}_{0} \gamma (X_{u})  \ud u \Big), 
 \end{split}
\end{equation}
on $\{ \tau_0 < \infty \}$, where $W^{\qprob_x}$ is a standard Brownian motion under $\qprob_x$. Therefore, (\ref{eq: def Q}) and (\ref{eq: Z inv}) imply
\begin{equation*}
\probax [\tau_{0} \le t] = \expec_{\qprob_x} \bra{ \frac{1}{Z_{t\wedge \tau_0}} \indic_{\{\tau_{0} \le t\}} } = 
\expec_{\qprob_x} \bra{ \exp  \Big( - \int^{x}_{0} a(v)  \ud v  - \int^{\tau_{0}}_{0} \gamma (X_{u})  \ud u  \Big) \indic_{\{\tau_{0} \le t\}} } \end{equation*}
for $(t,x) \in (\Real_+)^2$.

Note that the density function of $ \tau_{0} $ under $\qprob_x$ is given by $q_{x}$ in (\ref{eq: dens 0}). Using the regular conditional $\qprob_x$-expectation of $ \exp ( \int_{0}^{\tau_{0}} \gamma (X_{u})  \ud u) $, given $ \tau_{0} = t $, we can write
\begin{equation} \label{eq: pax Q}
\pax (t) = \probax[\tau_{0} \in \ud t ] = q_{x}(t) \exp  \Big (- \int^{x}_{0} a(v)  \ud v
 \Big) \expec_{\qprob_x} \Big[ \exp \Big( - \int^{\tau_{0}}_{0} \gamma(X_{u})  \ud u \Big)  \Big \vert \tau_{0} 
 = t\Big].  
\end{equation} 
Given $ \tau_{0} = t $, the regular conditional $\qprob_x$-distribution of $ (X_{t-s},  0 \le s \le t) $ is that of a 3-dimensional Bessel bridge from $0$ to $x$ over $[0,t]$ as a consequence of \cite[Proposition VI.3.10 and Proposition  VII.4.8]{RY99}. On the canonical space $ (C([0,1], \Real^{3}), \probbb) $ 
with coordinate process $ \beta $, the process $\set{ \lvert   (s / t) x \e_{1} + \sqrt{t} \beta_{s/t}\rvert,  0 \le s \le t}$ has the exact law of the aforementioned Bessel bridge. Therefore, 
\begin{equation*}
\begin{split}
& \expec_{\qprob_x} \Big[ \exp \Big( - \int^{\tau_{0}}_{0} \gamma(X_{u})  \ud u 
\Big)  \Big \vert \tau_{0} 
 = t\Big] = \expecbb\Big [ \exp  \Big( - \int^{t}_{0} \gamma \Big( \Big \lvert 
 \frac{s}{t} x \e_{1} + \sqrt{t} \beta_{s/t} \Big \rvert \Big)  \ud s  \Big)  \Big] \\
&=  \expecbb \left[ \exp \left(  - t\int_0^1 \gamma \left( \left|  u x \e_1 + \sqrt{t} \beta_u \right|
\right) \ud u \right) \right] ; \quad (t, x) \in (\Real_+)^2 .  
 \end{split}
\end{equation*}
Combining this with \eqref{eq: pax Q}, the proof of Proposition \ref{prop: p with BB3} is complete.

\subsection{Proof of Proposition \ref{prop: tight}}
The following technical result is the backbone of the proof.

\begin{lem}\label{lem: Lip I}
Suppose that Assumption \ref{ass: extra} holds, and define
\begin{equation} \label{eq: Iit}
\xi (t) \dfn \exp ( -t I (t)) := \exp 
\Big( -t \int_0^1 \gamma \left( 
\abs{  u x \e_1 + \sqrt{t} \beta_u}
\right) \ud u \Big), \quad t \in \Real_{+}.
\end{equation}
Then, we have
\[
|\xi (t) - \xi (s)|  \leq \Phi_T \abs{t-s} \text{ for all } s \in [0, T], \text{ and } t \in [0, T],   
\]
where $\expecbb \bra{|\Phi_T|^m} < \infty$ for all $T \in \Real_+$ and $m \in \Natural$.
\end{lem}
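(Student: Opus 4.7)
Let $B := \sup_{u \in [0,1]} |\beta_u|$; since $\beta$ is a $3$-dimensional Brownian bridge under $\probbb$, $B$ has Gaussian tails and therefore $\expecbb[B^m] < \infty$ for every $m \in \Natural$. Write $R_u(t) := |u x \e_1 + \sqrt{t} \beta_u|$, so $R_u(t) \leq x + \sqrt{T}\, B$ uniformly on $[0,1] \times [0,T]$, and $f(t) := t I(t)$, so $\xi(t) = e^{-f(t)}$. By Assumption \ref{ass: extra}, $K := -\inf_{z \in \Real_+} \gamma(z) < \infty$, whence $f(t) \geq -KT$ on $[0,T]$ and $\xi(t) \leq e^{KT}$. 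Applying the mean value theorem to $r \mapsto e^{-r}$ on $[-KT, \infty)$ gives
\[
|\xi(t) - \xi(s)| \leq e^{KT} |f(t) - f(s)|,
\]
so the task reduces to a Lipschitz bound on $f$ with a random, polynomial-in-$B$ constant.

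The polynomial growth of the Lipschitz constant of $a'$ in \eqref{eq: poly growth} propagates (since $\gamma = (a^2 + a')/2$ and $a$ is $C^1$) to polynomial growth of both $|\gamma|$ and the local Lipschitz constant $L_\gamma(M)$ of $\gamma$ on $[0, M]$. For $0 \leq s \leq t \leq T$, decompose
\[
f(t) - f(s) = (t - s) I(t) + s \bigl[ I(t) - I(s) \bigr].
\]
The first piece obeys $|I(t)| \leq \sup_{z \leq x + \sqrt{T} B} |\gamma(z)| \leq P_1(B)$ for some polynomial $P_1$. For the second, the reverse triangle inequality yields $|R_u(t) - R_u(s)| \leq |\sqrt{t} - \sqrt{s}|\, |\beta_u|$, whence
\[
|I(t) - I(s)| \leq L_\gamma(x + \sqrt{T} B) \cdot B \cdot |\sqrt{t} - \sqrt{s}|.
\]
The decisive algebraic identity is
\[
s\, |\sqrt{t} - \sqrt{s}| \;=\; \frac{s\, |t - s|}{\sqrt{t} + \sqrt{s}} \;\leq\; \sqrt{s}\, |t - s| \;\leq\; \sqrt{T}\, |t - s|,
\]
which upgrades the naive $1/2$-Hölder behavior of $t \mapsto \sqrt{t}$ into genuine Lipschitz behavior of $s[I(t) - I(s)]$. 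Combining the two pieces yields $|f(t) - f(s)| \leq P(B)\, |t - s|$ for a polynomial $P$ depending only on $x$ and $T$, and therefore the lemma holds with $\Phi_T := e^{KT} P(B)$, which has finite moments of every order by the Gaussian tails of $B$.

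The main obstacle is precisely this $\sqrt{t}$ singularity inside $R_u(t)$: a direct differentiation of $\xi$ in $t$ would produce a $1/\sqrt{t}$ factor and only yield absolute integrability of $\xi'$, not a uniform Lipschitz bound. The decomposition above is engineered to isolate the prefactor $s$ that cancels this singularity via the elementary identity $|\sqrt{t} - \sqrt{s}| = |t - s|/(\sqrt{t} + \sqrt{s})$; the rest is bookkeeping of polynomial bounds under Assumption \ref{ass: extra}.
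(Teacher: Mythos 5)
Your proof is correct and takes essentially the same route as the paper's: the identical decomposition $tI(t)-sI(s)=(t-s)I(t)+s\bigl(I(t)-I(s)\bigr)$, the same cancellation $s\,|\sqrt{t}-\sqrt{s}|\le \sqrt{T}\,|t-s|$, polynomial bounds on $\gamma$ and its local Lipschitz constant derived from \eqref{eq: poly growth}, and finiteness of all moments of $\sup_{u\in[0,1]}|\beta_u|$. One small fix: when $\inf_{z\in\Real_+}\gamma(z)>0$ your bound $\xi(t)\le e^{KT}$ fails (e.g.\ $\xi(0)=1>e^{KT}$), so replace $e^{KT}$ by $e^{KT}\vee 1$, which is exactly the constant $c_3=\exp(-T\inf\gamma)\vee 1$ used in the paper.
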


\begin{proof}
First, note that (\ref{eq: poly growth}) in 
Assumption \ref{ass: extra} implies that for every 
$ \kappa > 0$,   
\begin{equation*}
\begin{split}
\lvert a^\prime(v_1) \rvert \le \lvert a^\prime(0) \rvert 
+ \lvert a^\prime (v_1) - a^\prime (0) \rvert 
\le \lvert a^\prime(0) \rvert + c_1 v_1 + c_2 \kappa^n v_1
 ; \quad 0 \le v_1 \le \kappa ,  \qquad \\
\lvert a(\kappa) \rvert = \Big \lvert 
a (0) + \int^\kappa_0 a^\prime(u) 
\ud u \Big \rvert \le 
\lvert a (0) \rvert + 
\int^\kappa_0 \lvert a^\prime (u) \rvert \ud u 
\le \lvert a(0) \rvert + \lvert a^\prime (0) \rvert \kappa + c_1 \kappa^2
+ c_2 \kappa^{n+2}, \\
\lvert a(v_1) - a(v_2) \rvert \le \Big \lvert \int^{v_2}_{v_1} 
a^\prime (u) \ud u \Big \rvert \le 
\big (\lvert a' (0) \rvert + c_1 \kappa 
+c_2 \kappa^{n+1} \big) \lvert v_2 - v_1 \rvert  ; 
\quad 0 \le v_1 \le v_2 \le \kappa  . 
\end{split}
\end{equation*}
Using these inequalities, we obtain estimates for 
$ \gamma$ 
for every $ \kappa > 0$ and 
every $ 0 \le v_1 \le v_2 \le \kappa$,  
\begin{equation*} 
\abs{\gamma(v_1)} = \abs{\frac{a^2(v_1) + a^\prime(v_1)}{2}}
\le \frac{1}{2} \big( \lvert a(0) \rvert + \lvert a^\prime (0) \rvert \kappa + c_1 \kappa^2 + c_2 \kappa^{n+2}
\big)^2 + \frac{1}{2} \big( \lvert a^\prime(0) \rvert + c_1 \kappa +
c_2 \kappa^{n+1} \big) =: \varphi_1(\kappa),
\end{equation*}
\begin{equation} \label{eq: varphi 2}
\begin{split}
&\abs{ \gamma(v_1) - \gamma(v_2)}
\le \frac{1}{2} \abs{a^2(v_1) - a^2(v_2)} +
\frac{1}{2}\abs{a^\prime (v_1) - a^\prime(v_2)}  \\
& \le \frac{1}{2}\abs{a(v_1)+a(v_2)}\abs{a(v_1) - a(v_2)}  + 
\frac{1}{2} (c_1 + c_2 \kappa^n) \abs{v_1-v_2} 
\\
&  \le
\Big[ \big(
\lvert a(0) \rvert + \lvert a^\prime (0) \rvert \kappa + c_1 \kappa^2 + c_2 \kappa^{n+2} \big)\big( \lvert a^\prime(0) \rvert + c_1 \kappa +
c_2 \kappa^{n+1} \big) + \frac{1}{2} (c_1 + c_2 \kappa^n)
\Big] \abs{v_1 - v_2}
\\
& \qquad \qquad \qquad   
=: \varphi_2(\kappa) \abs{v_1 - v_2} , 
\end{split}
\end{equation}
where $\Real_+ \ni \kappa \mapsto \varphi_j (\kappa)$, $ j = 1, 2$, are polynomial functions of $ \kappa \in \Real_+$ and do not depend on 
$ v_1 $, $ v_2$.

Fix $T \in \Real_+$. For $s \in [0, T]$, $t \in [0, T]$ and $u \in [0,1]$, consider the random variables $ \kappa = \sqrt{T} \max_{0 \le u \le 1} \lvert \beta_u \rvert  + x$, $v (s, u) = 
\lvert ux \e_1 + \sqrt{s} \beta_u \rvert$, $v (t, u) = 
\lvert ux \e_1 + \sqrt{t} \beta_u \rvert$. Using the estimates established before, we obtain estimates for $ I (t)$ in (\ref{eq: Iit}): 
\begin{equation*} 
\abs{I (t)} = 
\Big \lvert 
\int^1_0 \gamma ( \lvert u x e_{1}+ \sqrt{t} \beta_u \rvert 
)\ud u \Big \rvert 
\le \int^1_0 \big \lvert 
\gamma ( \lvert 
u x e_{1}+ \sqrt{t} \beta_u \rvert ) \big \rvert \ud u 
= \int^1_0 \big \lvert \gamma (v(t, u)) \rvert \ud u 
\le \varphi_1 ( \kappa) , 
\end{equation*}
\begin{equation}\label{eq: ineq for second xi}
\begin{split}
s\abs{I^{1}(t)- I^{1}(s)} &\le 
s \int^1_0 \lvert \gamma (v (t, u)) 
- \gamma(v (s, u)) \rvert \ud u 
\le s \int^1_0 
\lvert v (t, u) - v (s, u) \rvert \varphi_2 (\kappa) \ud u \\
& \le s (\sqrt{t} - \sqrt{s}) 
\int^1_0 \lvert 
\beta_u \rvert \varphi_2(\kappa ) \ud u 
\le \frac{s (t-s)}{\sqrt{t} + \sqrt{s}} 
\, \frac{\kappa}{\sqrt{T}}
\, \varphi_2 (\kappa ) \\
&\le \frac{s \kappa \varphi_2(\kappa )}
{2 \sqrt{sT}} \, (t-s)
\le \frac{\kappa \varphi_2(\kappa )}{2}
\, (t-s)  ,   
\end{split}
\end{equation}
where we have used (\ref{eq: varphi 2}) in the second inequality, 
since 
$ 0 \le v(s, u) \le \kappa $, 
$ 0 \le v(t, u) \le \kappa $, 
used 
$ \lvert v (t, u) - v (s, u) \rvert \le 
(\sqrt{t} - \sqrt{s}) \lvert \beta_u \rvert$ 
in the third inequality, 
and $ \max_{0 \le u \le 1} \lvert \beta_u \rvert 
\leq \kappa /\sqrt{T}$ in the fourth inequality 
for $ 0 < s < t \le T$.    

Finally, since $ \gamma $ is bounded from below 
by Assumption \ref{ass: extra}, so is 
$ I $ in (\ref{eq: Iit}), that is, 
$ I^1(t) \ge \inf_{z \in \Real_+} \gamma(z) > - \infty$ for every $ t \ge 0$. 
With this observation, because of monotonicity and differentiability of exponential function, 
we obtain for 
$ 0 \le s < t \le T  $, 
\[
\begin{split}
& \qquad \qquad 
 \abs{\xi (t) - \xi (s)} =
\abs{e^{-t I (t)} - e^{-s I (s)}} 
\le (e^{-T \inf \gamma} \vee 1) \abs{t I (t) - s I (s)} \\
& = c_3 \lvert (t-s) I (t) + s(I (t) - I (s)) \rvert 
\le c_3 \abs{t-s}\abs{I(t)} 
+ c_3 s \abs{I (t) - I (s)}, 
\end{split}
\]
where $ c_3 := \exp ( - T \inf \gamma ) \vee 1 < \infty$.  
Combining this with the estimates for  
$ \lvert I (t) \rvert$ and 
$ s\lvert I (t) - I (s) \rvert $ in 
(\ref{eq: ineq for second xi}), we obtain, for $ 0 \le s < t \le T $, 
\[
|\xi (t) - \xi (s)| \le c_3 
\Big( 
\varphi_1(\kappa) + \frac{\kappa \varphi_2(\kappa)}{2} 
\abs{t-s} \Big) =: \varphi_3(\kappa) \abs{t-s} ,   
\]
where $ \varphi_1$ and $ \varphi_2$ are 
defined in (\ref{eq: varphi 2}), and hence 
$ \varphi_3$ can be written as 
a polynomial function of $ \kappa$ whose coefficients 
do not depend on $ s$ nor on $ t$ but on $ T$. Letting $\Phi_T$ be $\varphi_3(\kappa)$, and  noting that all positive integer moments 
of maximum of standard $ 3$-dimensional Bessel Bridge are
finite (\cite[Corollary 7]{MR1701890}), we conclude the proof of Lemma \ref{lem: Lip I}.
\end{proof}

Let us define $\nu(t):= \expecbb \bra{\xi (t)}$ for $ 0 \le t \le T$. It follows from Lemma \ref{lem: Lip I} that $ \xi$ is locally Lipschitz continuous and moreover, 
\[
\expecbb \bra{\left| \xi (t) - \xi (s) - (\nu(t) - \nu(s)) \right|^{2} }
\le c_{4} \abs{t-s}^{2} ; \quad  0 \le s < t \le T  . 
\]
Since the random paths $ \{\widehat{\beta}^i, i = 1, \ldots , N\}$ are independent and identically distributed, for any $ s, t \le T $ we obtain 
\begin{equation} \label{eq: estimate on module}
\begin{split}
&\expecbb \big \lvert 
\eta^{N}(t) - \eta^{N}(s) \big \rvert ^{2} \\
&= \big ( q_{x}(t) \big)^{2} 
\exp \Big( - 2\int_0^x a(v) \ud v \Big)
 \expecbb \bra{ \big \lvert \xi (t) - \xi (s) - (\nu(t) - \nu(s)) \big \rvert^{2} } \\
& \le \big( \max_{0 \le t \le T}q_{x}(t) \big)^{2}
\exp \Big( - 2\int_0^x a(v) \ud v \Big)
 c_{4} \abs{t-s}^{2} =: c_{5} \abs{t-s}^{2},
\end{split}
\end{equation}
where the constant $ c_{5}$ 
depends on $ T, x $ but not on $N, s, t$. This inequality is a sufficient condition for the tightness of the sequence $\set{\eta^{N} \such N \in \Natural}$ of continuous stochastic processes starting at $0$  in $ C(\Real_+, \Real)$ --- see \cite[Problem 2.4.12]{MR1121940}. By the usual multi-dimensional central limit theorem, for each $n\ge 1$, $0\le t_{1} < \cdots < t_{n}< \infty$ the sequence $\set{ \pare{\eta^{N}(t_{1}), \eta^{N}(t_{2}), \ldots , \eta^{N}(t_{n})}  \such N \in \Natural}$ of random vectors converges in distribution to a Gaussian random vector with mean zero and and variance-covariance matrix $\pare{\Gamma(t_i, t_j)}_{1 \leq i, j \leq n}$, where
\[
\Gamma(s,t)  =
\exp \Big( -2\int^x_0 a(v) \ud v\Big)  \covbb [\exp (-sI(s)), 
\exp (-t I(t))], \quad (s,t) \in \Real_+^2.  
\]
Therefore,  we conclude that the tight sequence $\set{\eta^{N} \such N \in \Natural}$ converges weakly to a continuous Gaussian process with mean zero and continuous covariance function $\Gamma$.

\subsection{Proof of Proposition \ref{prop: conv in prob}}

Define that Gaussian tail function $ \bar \Phi $ via
\[
\bar \Phi (z) = \int^{\infty}_{z} \frac{e^{-y^{2}/2}}{\sqrt{2\pi}} \ud y, \text{ for } z \in \Real.
\]
Furthermore, for fixed $T \in \Real_+$, define 
the modulus of continuity in $ \mathbb L^2$: 
\begin{equation} \label{eq: varphi}
\psi_T (h) := \max_{(s,t) \in [0,T]^2 ,  |t-s| \le h} \left[ \expecbb ( \eta (t) - \eta(s)) ^{2} \right]^{1/2},
\text{ for }  h \in [0, T]. 
\end{equation} 
It follows from (\ref{eq: estimate on module}) that $ \psi_T (h) \le \sqrt{c_{5}} h$ for $h \in \Real_+$; therefore, $ \int_1^\infty \psi_T (e^{-y^2}) \ud y < \infty$. We recall the following Fernique's inequality 
for Gaussian processes, which we shall use. 

\begin{lem}[Fernique's inequality --- see (2.2) of \cite{MR791269}] \label{lem: Berman}
If the function $\psi_T$ in (\ref{eq: varphi}) satisfies 
$ \int^\infty_1 \psi_T ( e^{-y^2}) \ud y$, 
then for that fixed $ T>0 $ and any integer $m \ge 2 $, 
\[
\probbb \bra{ \max_{0 \le t \le T} \abs{ \eta(t) } > C_{1}(T, m) z
} \le C_{2}(m) \bar \Phi(z), \quad 
\text{ for all } z > (1+ 4 \log m )^{1/2} , 
\]
where $ C_{1}(T, m):= \max_{(s,t)\in [0,T]^{2}} 
\Gamma(s,t)^{1/2} + (2+\sqrt{2}) \int_{1}^{\infty}
\psi_T (T m^{-y^{2}}) \ud y$, 
and $ C_{2}:= 5 m^2 \sqrt{2\pi}/2$. 
\end{lem}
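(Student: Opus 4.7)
The plan is to prove this version of Fernique's inequality by the classical chaining argument, following the presentation in \cite{MR791269}. First, for each $k \geq 0$, I would choose a partition $\pi_k \subset [0, T]$ with mesh at most $T m^{-k^2}$ and cardinality $|\pi_k| \leq m^{k^2} + 1$, and let $\pi_k(t)$ denote a nearest point to $t$ in $\pi_k$. Sample-path continuity of the Gaussian process $\eta$ (assured by $\psi_T(h) \leq \sqrt{c_5}\, h$ together with Kolmogorov's criterion, exactly the bound derived in (\ref{eq: estimate on module})) justifies the telescoping decomposition
\[
\eta(t) = \eta(\pi_0(t)) + \sum_{k = 0}^{\infty} \bigl( \eta(\pi_{k+1}(t)) - \eta(\pi_k(t)) \bigr), \quad t \in [0, T],
\]
so it suffices to control each level of the chain separately.

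For fixed $k$, the increment $\eta(\pi_{k+1}(t)) - \eta(\pi_k(t))$ is centered Gaussian with standard deviation at most $\psi_T(T m^{-k^2})$, because $|\pi_{k+1}(t) - \pi_k(t)| \leq T m^{-k^2}$. As $t$ ranges over $[0, T]$ the pair $(\pi_k(t), \pi_{k+1}(t))$ takes at most $|\pi_{k+1}| \leq 2 m^{(k+1)^2}$ distinct values (assuming the partitions are nested, so that each $\pi_{k+1}$-point uniquely determines its $\pi_k$-nearest neighbour). For an allocation sequence $(a_k)_{k \geq 0}$ to be determined, a union bound against $2 \bar\Phi(a_k)$ gives
\[
\probbb \bra{ \sup_{t \in [0, T]} \abs{ \eta(\pi_{k+1}(t)) - \eta(\pi_k(t)) } > a_k \psi_T(T m^{-k^2}) } \leq 4 m^{(k+1)^2} \bar\Phi(a_k),
\]
while $\sup_{t \in \pi_0} |\eta(t)|$ is handled separately via the variance bound $\max_{[0,T]} \Gamma(t,t)^{1/2}$, which supplies the first summand of $C_1(T, m)$.

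The crux is then to choose $a_k$ so that the telescoped amplitudes reproduce the integral in $C_1(T, m)$ while the telescoped probabilities match $C_2(m) \bar\Phi(z)$. Taking $a_k$ of the form $\alpha z + \beta (k+1) \sqrt{\log m}$ with appropriate $\alpha, \beta$ (arranged so that $a_k^2 \geq z^2 + (k+1)^2 \cdot 4 \log m$ for $z \geq (1 + 4 \log m)^{1/2}$), one obtains on the one hand
\[
\sum_{k = 0}^{\infty} a_k \psi_T(T m^{-k^2}) \leq (2 + \sqrt{2}) \, z \int_{1}^{\infty} \psi_T(T m^{-y^2}) \, \ud y,
\]
by a Riemann-sum comparison using the monotonicity of $\psi_T$ and the change of variable $y \leftrightarrow k+1$, and on the other hand
\[
\sum_{k = 0}^\infty 4 m^{(k+1)^2} \bar\Phi(a_k) \leq \bar\Phi(z) \sum_{k = 0}^\infty 4 m^{(k+1)^2} \exp \pare{ - \frac{a_k^2 - z^2}{2} } \leq \pare{ C_2(m) - 2 \abs{\pi_0} } \bar\Phi(z),
\]
via $\bar\Phi(y) \leq \bar\Phi(z) \exp(-(y^2 - z^2)/2)$ for $y \geq z$, the exponential factor dominating $m^{(k+1)^2}$ precisely when $z > (1 + 4 \log m)^{1/2}$. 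Combining these with the separate control of $\sup_{\pi_0} |\eta|$ yields the announced inequality with $C_1(T,m)$ and $C_2(m) = 5 m^2 \sqrt{2\pi} / 2$.

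The principal obstacle is the tightness of the constants: the factor $(2 + \sqrt{2})$ in $C_1$ and $5 m^2 \sqrt{2 \pi}/2$ in $C_2$ leave no slack and dictate both the precise split of $a_k$ into a $z$-proportional part and a $\sqrt{\log m}$-proportional part, and also the threshold $z > (1 + 4 \log m)^{1/2}$. Carrying out the Riemann-sum comparison sharply (rather than with a crude geometric majorant) requires the monotonicity of $\psi_T$, and assembling the Gaussian tail bound $\bar \Phi(y) \leq e^{-y^2/2}/(y\sqrt{2\pi})$ across scales so that it aggregates to $5/2$ demands an Abel-type summation rather than a one-shot estimate.
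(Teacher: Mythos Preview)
The paper does not prove this lemma at all: it is quoted verbatim from \cite{MR791269} (as the label ``see (2.2) of \cite{MR791269}'' indicates) and then invoked as a black box in the proof of Proposition~\ref{prop: conv in prob}. So there is no ``paper's own proof'' to compare against; your proposal is an attempt to supply what the authors deliberately outsourced.

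That said, your sketch follows the classical Fernique chaining route and is the right strategy. A few points where the argument, as written, is not yet a proof: (i) the cardinality bound you need is on the number of \emph{pairs} $(\pi_k(t),\pi_{k+1}(t))$, and without nesting this can be larger than $|\pi_{k+1}|$; you should either build the partitions to be nested from the start or bound the pair count directly; (ii) the displayed inequality $\sum_k a_k\,\psi_T(Tm^{-k^2}) \le (2+\sqrt{2})\,z\int_1^\infty \psi_T(Tm^{-y^2})\,\ud y$ cannot be right as stated, since the left side grows linearly in $k$ through the $\beta(k+1)\sqrt{\log m}$ part of $a_k$ while the right side is a fixed multiple of $z$ --- the factor $z$ on the right should multiply only the contribution coming from the $\alpha z$ part of $a_k$, and the $\sqrt{\log m}$ part has to be absorbed separately (this is where the precise constant $2+\sqrt{2}$ and the threshold on $z$ interact); (iii) the final numerical claim that the tail sum aggregates to exactly $5m^2\sqrt{2\pi}/2$ is asserted rather than shown. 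None of these is fatal to the approach, but each requires a genuine calculation that your sketch defers. If you want the exact constants of the lemma, you will have to reproduce Fernique's original bookkeeping rather carefully; if you only need the qualitative conclusion (which is all the paper actually uses: that $\max_{[0,T]}|\eta|$ has Gaussian tails, hence is a.s.\ finite), a cruder version of the chaining suffices.
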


The weak convergence of $\set{\eta^N \such N \in \Natural}$ to $\eta$ and the invariance principle for the maximum function imply that the sequence of normalized maxima $\pare{ \sqrt{N}  \max_{t \in [0, T]} \abs{\phx (t)- \px(t)}}_{N \in  \Natural}$ converges weakly to $\max_{0 \le t \le T} \eta (t)$, where $\eta$ is the limiting Gaussian process, as $N$ goes to infinity. Since the law of the last random variable does not charge $\infty$, we conclude that the family $\set{ \sqrt{N}  \max_{t \in [0, T]} \abs{\phx (t)- \px(t)} \such N \in  \Natural}$ is bounded in probability.

\subsection{Proof of Proposition \ref{lm: bounds lambda}}

For the lower bound in (\ref{eq: estimate lgx limit}) let us observe
\[
\log \expecbb \left[ \exp \left( - t \int^{1}_{0} \gamma \left( \left|  u x \e_1 + \sqrt{t} \beta_u \right|
\right) \ud u \right) \right]  \le 
-t \inf_{z\in \Real_{+}} \gamma (z) ; \quad t \in \Real_+,  
\]
by Assumption \ref{ass: extra}. Therefore,
\[
\inf_{z \in \Real_{+}} \gamma (z) \le 
\inf_{t \in \Real_{+}} \left[ - \frac{1}{t}
\log \expecbb \left[ \exp \left( - t \int^{1}_{0} \gamma \left( \left|  u x \e_1 + \sqrt{t} \beta_u \right|
\right) \ud u \right) \right] \right]=  
\inf_{t \in \Real_{+}} 
\lgx (t) . 
\]
For the upper bound in (\ref{eq: estimate lgx limit}), 
for a fixed $\kappa > 0$ let us consider $ A_\kappa \dfn \set{ \sqrt{t} \max_{0 \le u \le 1} \lvert \beta_{u} \rvert \le \kappa}$. On $A_\kappa$, $ \lvert u x \e_{1} + \sqrt{t} \beta_{u} \rvert \le \kappa + x $ holds for 
$ 0 \le u \le 1$; hence, 
\[
 \int^{1}_{0} \gamma \left( \left|  u x \e_1 + \sqrt{t} \beta_u \right|
\right) \ud u \le \max_{0\le z \le \kappa + x} \gamma (z) = m(\kappa + x), 
\]
where $ m(w) \dfn \max_{0 \le z \le w} \gamma (z) $ for $ w > 0$.  
It follows that, for $ t \in \Real_+$, 
\begin{equation*}
\begin{split}
\expecbb \left[ \exp \left( - t \int^{1}_{0} \gamma \left( \left|  u x \e_1 + \sqrt{t} \beta_u \right|
\right) \ud u \right) \right] 
&\ge \expecbb \left[ \exp \left( - t \int^{1}_{0} \gamma \left( \left|  u x \e_1 + \sqrt{t} \beta_u \right|
\right) \ud u \right) \indic_{A_\kappa} \right ]  \\
&\ge \exp \left( - t m ( \kappa + x) \right) \probbb
\Big [ \sqrt{t}\max_{0\le u \le 1} \lvert \beta_{u} \rvert \le \kappa \Big] ,  
\end{split}
\end{equation*}
and hence 
\begin{equation} \label{eq: upper pre-bound} 
\lgx(t) \leq m(\kappa + x) - \frac{1}{t} \log 
\probbb \bra{ \max_{0 \le u \le 1} \lvert \beta_u \rvert 
\le \kappa t^{-1/2} }. 
\end{equation}
The distribution for the maximum of the absolute value of the standard 3-dimensional Bessel bridge $|\beta|$ is known --- see, for example, \cite[(5)]{MR1701890}. More precisely, we have
\begin{equation} \label{eq: bes br dist}
\probbb \bra{ \max_{0 \le u \le 1} \lvert \beta_u \rvert 
\le \kappa t^{-1/2} } 
= \frac{2}{\kappa^3}\sqrt{\frac{2t}{\pi}} 
\sum_{n=1}^\infty \frac{n \pi}{ J^2_{3/2} (n\pi)} 
\exp \Big( - \frac{\pi^2 n^2 }{2 \kappa^2} t\Big), \text{ for } 
 \kappa > 0 \text{ and }   t \in \Real_+,   
\end{equation}
where $ J_{3/2}$ is the Bessel function of 
index $ 3/2$. In particular,
\[
\lim_{t \to \infty} \pare{- \frac{1}{t} \probbb \log \bra{ \max_{0 \le u \le 1} \lvert \beta_u \rvert \le \kappa t^{-1/2} }} = \frac{\pi^2}{2 \kappa^2},
\]
as only the first term in the summand in the series in \eqref{eq: bes br dist} will play a role in the limit. Combining the last limiting relationship with inequality \eqref{eq: upper pre-bound}, we obtain
\[
\limsup_{t \to \infty} \lgx (t) \leq m(\kappa + x) + \frac{\pi^2}{2 \kappa^2}
\]
Upon minimizing the right-hand side of the last inequality,
the upper bound in  (\ref{eq: estimate lgx limit}) is obtained.

\smallskip

Finally, to verify (\ref{eq: lgx at zero}), observe that $\gamma$ being bounded from below implies that the random variables $ \exp (- t \int^1_0 \gamma( \lvert u x \e_1 +\sqrt{t} \beta_u \rvert ) \ud u )$ 
are uniformly bounded for small $t \in \Real_+$. Then, de L'H\^ospital's rule and the bounded convergence 
theorem give
\begin{equation*}
\lim_{t\downarrow 0} \lgx(t) = 
(-1) \lim_{t \downarrow 0} 
\frac{\pare{\partial  / \partial t} 
\expecbb [\exp ( - t \int^1_0 \gamma( \lvert u x \e_1 
+\sqrt{t} \beta_u \rvert ) \ud u )]}
{\expecbb [\exp ( - t \int^1_0 \gamma( \lvert u x \e_1 
+\sqrt{t} \beta_u \rvert ) \ud u )]} =
\int^1_0 \gamma( u x )\ud u 
= \frac{1}{x} \int^x_0 \gamma (u) \ud u. 
\end{equation*}

\bibliographystyle{siam}
\bibliography{first_passage_time_density}

\def\cprime{$'$}
\begin{thebibliography}{10}

\bibitem{MR1460546}
{\em Spectral theory and computational methods of {S}turm-{L}iouville
  problems}, vol.~191 of Lecture Notes in Pure and Applied Mathematics, New
  York, 1997, Marcel Dekker Inc.
\newblock Edited by Don Hinton and Philip W. Schaefer.

\bibitem{MR2591904}
{\sc L.~Alili and P.~Patie}, {\em Boundary-crossing identities for diffusions
  having the time-inversion property}, J. Theoret. Probab., 23 (2010),
  pp.~65--84.

\bibitem{MR1849251}
{\sc P.~Baldi, L.~Caramellino, and M.~G. Iovino}, {\em Pricing general barrier
  options: a numerical approach using sharp large deviations}, Math. Finance, 9
  (1999), pp.~293--322.

\bibitem{MR791269}
{\sc S.~M. Berman}, {\em An asymptotic bound for the tail of the distribution
  of the maximum of a {G}aussian process}, Ann. Inst. H. Poincar\'e Probab.
  Statist., 21 (1985), pp.~47--57.

\bibitem{MR776891}
{\sc J.~Durbin}, {\em The first-passage density of a continuous {G}aussian
  process to a general boundary}, J. Appl. Probab., 22 (1985), pp.~99--122.

\bibitem{MR0068701}
{\sc J.~Elliott}, {\em Eigenfunction expansions associated with singular
  differential operators}, Trans. Amer. Math. Soc., 78 (1955), pp.~406--425.

\bibitem{MR1725406}
{\sc K.~D. Elworthy, X.-M. Li, and M.~Yor}, {\em The importance of strictly
  local martingales; applications to radial {O}rnstein-{U}hlenbeck processes},
  Probab. Theory Related Fields, {\bf 115} (1999), pp.~325--355.

\bibitem{MR1999614}
{\sc P.~Glasserman}, {\em Monte {C}arlo methods in financial engineering},
  vol.~53 of Applications of Mathematics (New York), Springer-Verlag, New York,
  2004.
\newblock Stochastic Modelling and Applied Probability.

\bibitem{MR1757112}
{\sc E.~Gobet}, {\em Weak approximation of killed diffusion using {E}uler
  schemes}, Stochastic Process. Appl., 87 (2000), pp.~167--197.

\bibitem{GJY03}
{\sc A.~G\"oing-Jaeschke and M.~Yor}, {\em A clarification note about hitting
  times densities for ornstein-uhlenbeck processes}, Finance and Stochastics, 7
  (2003), pp.~413--415.

\bibitem{MR1121940}
{\sc I.~Karatzas and S.~E. Shreve}, {\em Brownian motion and stochastic
  calculus}, vol.~113 of Graduate Texts in Mathematics, Springer-Verlag, New
  York, second~ed., 1991.

\bibitem{MR576891}
{\sc J.~T. Kent}, {\em Eigenvalue expansions for diffusion hitting times}, Z.
  Wahrsch. Verw. Gebiete, 52 (1980), pp.~309--319.

\bibitem{MR1214374}
{\sc P.~E. Kloeden and E.~Platen}, {\em Numerical solution of stochastic
  differential equations}, vol.~23 of Applications of Mathematics (New York),
  Springer-Verlag, Berlin, 1992.

\bibitem{MR2061247}
{\sc G.~N. Milstein and M.~V. Tretyakov}, {\em Evaluation of conditional
  {W}iener integrals by numerical integration of stochastic differential
  equations}, J. Comput. Phys., 197 (2004), pp.~275--298.

\bibitem{MR2069903}
\leavevmode\vrule height 2pt depth -1.6pt width 23pt, {\em Stochastic numerics
  for mathematical physics}, Scientific Computation, Springer-Verlag, Berlin,
  2004.

\bibitem{MR1984752}
{\sc G.~Peskir}, {\em On integral equations arising in the first-passage
  problem for {B}rownian motion}, J. Integral Equations Appl., 14 (2002),
  pp.~397--423.

\bibitem{MR1326606}
{\sc R.~G. Pinsky}, {\em Positive harmonic functions and diffusion}, vol.~45 of
  Cambridge Studies in Advanced Mathematics, Cambridge University Press,
  Cambridge, 1995.

\bibitem{MR1701890}
{\sc J.~Pitman and M.~Yor}, {\em The law of the maximum of a {B}essel bridge},
  Electron. J. Probab., 4 (1999), pp.~no. 15, 35 pp.1--35 (electronic).

\bibitem{RY99}
{\sc D.~Revuz and M.~Yor}, {\em Continuous {M}artingales and {B}rownian
  {M}otion}, Volume {\bf 293} of Grundlehren der Mathematischen Wissenschaften
  $[$Fundamental Principles of Mathematical Sciences$]$, Springer-Verlag,
  Berlin, third~ed., 1999.

\bibitem{MR2170950}
{\sc A.~Zettl}, {\em Sturm-{L}iouville theory}, vol.~121 of Mathematical
  Surveys and Monographs, American Mathematical Society, Providence, RI, 2005.

\end{thebibliography}

\end{document}